\newcommand{\g}[2]{\ensuremath{\langle #1,#2 \rangle}}
\newcommand{\ja}[1]{\ensuremath{\mathcal{J}( #1)}}
\newcommand{\model}{$(V,\langle\cdot,\cdot \rangle, A)$ }
\newtheorem{theorem}{Theorem}
\newtheorem{lemma}[theorem]{Lemma}
\begin{document}

\title[The Osserman condition and the Raki\'c duality principle]{Equivalence between the Osserman condition and the Raki\'c duality principle in dimension four}
\author{M. Brozos-V\'{a}zquez $\,$ E. Merino}
\address{Department of Mathematics, University of A Coru\~na, Spain}
\email{miguel.brozos.vazquez@udc.es, eugenio.merino@udc.es}
\thanks{2010 {\it Mathematics Subject Classification}: 53C20.\\
M. B.-V. is supported by projects MTM2009-07756 and INCITE09 207 151 PR (Spain). E. M. is supported by projects MTM2008-05861 and INCITE09 207 151 PR (Spain)}
\keywords{Jacobi operator, Osserman manifold, rank-one symmetric space}

\begin{abstract}
We show that $4$-dimensional Riemannian manifolds which satisfy the Raki\'c duality principle are Osserman (i.e. the eigenvalues of the Jacobi operator are constant), thus both conditions are equivalent.
\end{abstract}

\maketitle


The Jacobi operator of a two-point-homogeneous Riemannian manifold has constant eigenvalues. In \cite{Osserman} Osserman wondered if the converse is true. That problem, known in the literature as the Osserman problem, was solved by the contributions of several authors, see \cite{Chi1,Gilkey-Swann-Vanhecke,Ni1,Ni2} and \cite{GKV, Gilkey-book} for a broad exposition on the topic.

More formally the Osserman condition can be phrased as:

\noindent{\bf Pointwise Osserman condition.} A Riemannian manifold $(M,g)$ is pointwise Osserman if the eigenvalues of the Jacobi operator $\ja{x}=R(\cdot,x)x$ do not depend on the unit vector $x\in T_pM$, for every point $p\in M$ (the eigenvalues may vary from point to point).

On the process of studying Osserman manifolds, it was shown that pointwise Osserman manifolds satisfy the following duality principle (see \cite{R}, \cite{gilkey-p-osserman} and \cite{Gilkey-book}), which has also been investigated recently in higher signature in \cite{AR}.

\noindent{\bf Raki\'c duality principle.} A Riemannian manifold $(M,g)$ satisfies the Raki\'c duality principle if for every point $p\in M$ and for any unit vectors $x,y\in T_pM$:
\begin{equation}\label{eq:duality}
\ja{x}y=\lambda y\quad \Rightarrow\quad \ja{y}x=\lambda x\,,
\end{equation}
where $\lambda$ is a real number.

Note that both definitions are pointwise. The pointwise Osserman condition is not equivalent to the global one (i.e. the eigenvalues do not depend on the point $p$) in dimension four \cite{Gilkey-Swann-Vanhecke}. This fact, together with some other features that we will recall in Section~\ref{section:preliminaries}, makes dimension four a special dimension for the Osserman problem. It is an open problem in this context whether the Raki\'c duality principle implies the Osserman condition. The following is the main theorem of the paper and provides an affirmative answer to that question.

\begin{theorem}\label{th:main}
Let $(M,g)$ be a Riemannian manifold of dimension $4$. The following assertions are equivalent:
\begin{enumerate}
\item[(i)] $(M,g)$ is pointwise Osserman.
\item[(ii)] $(M,g)$ satisfies the Raki\'c duality principle.
\end{enumerate}
\end{theorem}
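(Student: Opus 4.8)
The implication (i) $\Rightarrow$ (ii) is classical and already recorded in the excerpt (see \cite{R,gilkey-p-osserman,Gilkey-book}); it holds in every dimension because when $\ja{x}$ is self-adjoint and its spectrum is independent of the unit vector $x$, the eigenspace decomposition varies smoothly and a standard argument with the symmetry $\g{R(a,x)x}{b}=\g{R(b,x)x}{a}$ yields \eqref{eq:duality}. So the whole content is the converse (ii) $\Rightarrow$ (i), and the plan is to work pointwise: fix $p\in M$, set $V=T_pM$ with its inner product and curvature tensor $R$, and show that the self-adjoint operator $\ja{x}$ has $x$-independent eigenvalues on the unit sphere of $V$.

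First I would exploit the duality principle to constrain the algebraic structure of $R$ at $p$. The key observation is that \eqref{eq:duality} forces a strong rigidity on eigenvector pairs: if $x$ is a unit vector and $\{x, e_2, e_3, e_4\}$ an orthonormal eigenbasis of $\ja{x}$ with eigenvalues $0,\mu_2,\mu_3,\mu_4$, then applying the principle to each $e_i$ gives that $x$ is an eigenvector of $\ja{e_i}$ with the same eigenvalue $\mu_i$, i.e. $R(x,e_i)e_i = \mu_i\, x + (\text{terms in } e_2,e_3,e_4)$. I would then differentiate, or rather vary $x$ within the sphere, and test the duality relation along curves $x(t)$ to extract that the off-diagonal sectional-curvature-type components are controlled; concretely, one shows the eigenvalue functions $\mu_i(x)$ are critical at every point, hence locally constant, hence constant on the connected unit sphere. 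In dimension four the sphere $S^3$ is connected, which is exactly the feature that makes the argument close.

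The technical heart, and the step I expect to be the main obstacle, is handling the possibility of repeated eigenvalues and the non-smoothness of the eigenspace distribution where multiplicities jump. The clean approach is to avoid choosing eigenvectors and instead work with the symmetric functions of the spectrum: the coefficients of the characteristic polynomial of $\ja{x}$ restricted to $x^{\perp}$ are smooth functions of $x\in S^3$, and it suffices to prove each is constant. One reduces this to showing that the first such coefficient, $\mathrm{tr}\,\ja{x} = \mathrm{Ric}(x,x)$, together with $\mathrm{tr}\,\ja{x}^2$ and $\mathrm{tr}\,\ja{x}^3$, are constant on $S^3$; the duality principle is used to show the gradient of each on $S^3$ vanishes. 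For $\mathrm{tr}\,\ja{x}$ this will say $(M,g)$ is Einstein, which in dimension four is a known prerequisite for (pointwise) Osserman; for the higher traces one feeds in \eqref{eq:duality} applied to eigenvectors of $\ja{x}$, using that any eigenvector $y$ of $\ja{x}$ returns the information $\ja{y}x=\mu x$, to cancel the cross terms that appear when differentiating $\mathrm{tr}\,\ja{x}^k$.

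Finally, once the three trace functions (equivalently the characteristic polynomial of $\ja{x}\big|_{x^\perp}$) are shown to be $x$-independent at $p$, the eigenvalues of $\ja{x}$ are $x$-independent at $p$, which is precisely the pointwise Osserman condition; letting $p$ range over $M$ completes the proof. It is worth noting that this does \emph{not} upgrade to the global Osserman condition, consistent with the remark in the excerpt that the two notions differ in dimension four \cite{Gilkey-Swann-Vanhecke}; the eigenvalues are allowed to vary from point to point, and nothing in the duality principle, which is itself pointwise, prevents that.
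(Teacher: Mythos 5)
Your route for (ii) $\Rightarrow$ (i) is genuinely different from the paper's, and it does work once the decisive computation --- which you describe but never actually carry out --- is performed. The paper proceeds by building an adapted orthonormal basis in which all four Jacobi operators diagonalize simultaneously (Lemma~\ref{lemma:4-dim-basis}), proving the Einstein condition through an eigenvalue case analysis (Lemma~\ref{lemma:einstein}), determining every curvature component explicitly so as to check anti-self-duality of the Weyl tensor (Lemma~\ref{lemma:self-dual}), and then invoking the four-dimensional characterization of Theorem~\ref{th:einstein-self-dual}. Your plan is to show instead that each $\operatorname{tr}\ja{x}^k$ is constant on the unit sphere because its derivative along great circles vanishes; since you only assert the cancellation, here it is: for $x(t)=\cos t\,x+\sin t\,v$ with $v\perp x$ unit, one has $\frac{d}{dt}\ja{x(t)}\big|_{t=0}=T$ with $Tu=A(u,x)v+A(u,v)x$, and if $\{e_i\}$ is an orthonormal eigenbasis of $\ja{x}$ with $\ja{x}e_i=\mu_i e_i$, then the curvature symmetries give $\g{Te_i}{e_i}=2\,\g{\ja{e_i}x}{v}$, so
\[
\tfrac{d}{dt}\operatorname{tr}\bigl(\ja{x(t)}^k\bigr)\big|_{t=0}
=k\operatorname{tr}\bigl(\ja{x}^{k-1}T\bigr)
=k\sum_i\mu_i^{k-1}\g{Te_i}{e_i}
=2k\sum_i\mu_i^{k-1}\g{\ja{e_i}x}{v}=0,
\]
the last equality being exactly the duality principle, $\ja{e_i}x=\mu_i x\perp v$. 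Hence every $\operatorname{tr}\ja{x}^k$ is constant on the connected unit sphere and the spectrum of $\ja{x}|_{x^\perp}$ is $x$-independent, as you wanted. What each approach buys: the paper's argument produces the explicit curvature and Weyl tensors in the adapted basis and situates the theorem inside the Einstein/(anti-)self-dual picture, while yours is shorter, avoids all case analysis, and --- notably --- uses $\dim V=4$ nowhere, so it proves the implication in every dimension (a statement that is indeed true, established later by Nikolayevsky and Raki\'c). That last point shows your remark that connectedness of $S^3$ is ``exactly the feature that makes the argument close'' is off target: unit spheres are connected in every dimension at least $2$. Two small corrections for a final write-up: duality gives $\ja{e_i}x=\mu_i x$ exactly, not $\mu_i x$ plus terms in $e_2,e_3,e_4$; and the individual eigenvalue functions $\mu_i(x)$ need not be differentiable where multiplicities cross, so the argument must be run (as you eventually propose) on the trace powers or the characteristic polynomial coefficients rather than on the $\mu_i$ themselves.
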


{\bf Outline of the paper.} In Section~\ref{section:preliminaries} we recall some results and introduce the notation we will use in the proof of Theorem~\ref{th:main}. In Section~\ref{section:dim3} we show that in dimension three the Raki\'c duality principle implies the Osserman condition, thus showing that an analogous of Theorem~\ref{th:main} is also true in a lower dimension. Finally, in Section~\ref{section:dim4}, we prove Theorem~\ref{th:main}.

\section{Preliminaries}\label{section:preliminaries}
We work at a purely algebraic level. Let $V$ be a vector space of dimension $n$, $\g{\cdot}{\cdot}$ a positive definite inner product and $A$ an algebraic curvature tensor, i.e. a $(0,4)$-tensor which satisfies the following relations:
\begin{equation}\label{eq:sim-curv}
\begin{array}{l}
A(x,y,z,w)=-A(y,x,z,w)=A(z,w,x,y),\\
A(x,y,z,w)+A(y,z,x,w)+A(z,x,y,w)=0\,.
\end{array}
\end{equation}
We refer to the triple $(V,\langle\cdot,\cdot \rangle, A)$ as an algebraic model.

We use the inner product to upper indices and define the curvature operator by $\g{A(x,y)z}{w}:=A(x,y,z,w)$ for any vectors $x,y,z,w\in V$. Thus the Jacobi operator is defined as $\mathcal{J}(x)y:=A(y,x)x$. Note that $\ja{x}x=0$, so we restrict $\ja{x}$ to $x^\perp$ henceforth.

Using analogy with the pointwise geometric setting, we say that the model \model is Osserman if the eigenvalues of $\ja{x}$ do not depend on the unit vector $x\in V$. Similarly, we say that \model satisfies the Raki\'c duality principle if for any eigenvalue $\lambda$ we have that
$\ja{x}y=\lambda y$ if and only if $\ja{y}x=\lambda x$.

A model is said to be Einstein if $\rho(\cdot,\cdot)=c\g{\cdot}{\cdot}$, where $\rho(x,y):=Tr\{z\rightarrow A(z,x)y\}$ is the Ricci tensor and $c$ is a real number. Contract this identity to see that $c=\frac{\tau}{n}$, where $\tau$ denotes the scalar curvature. Every Osserman model is Einstein (see \cite{GKV,Gilkey-book}).

A particular feature of $4$-dimensional models is that the Hodge star operator $\star$ acts on the space of bi-vectors $\Lambda=\{x\wedge y: x,y\in V\}$ satisfying $\star^2=\operatorname{Id}$, where $\operatorname{Id}$ stands for the identity map. This induces a splitting $\Lambda=\Lambda^+\oplus\Lambda^-$ into the eigenspaces associated to the $+1$ and $-1$ eigenvalues.
For an orthonormal basis $\{e_1,e_2,e_3,e_4\}$ of $V$, an orthonormal basis of $\Lambda^\pm$ is given by
\begin{equation}\label{eq:basis-bivectors}
\begin{array}{c}
\Lambda^\pm=\operatorname{span}\left\{E_1^\pm=\frac{e^1\wedge e^2\pm e^3\wedge e^4}{\sqrt{2}},E_2^\pm=\frac{e^1\wedge
e^3\mp e^2\wedge e^4}{\sqrt{2}},E_3^\pm=\frac{e^1\wedge e^4\pm e^2\wedge e^3}{\sqrt{2}}\right\}.
\end{array}
\end{equation}
The Weyl tensor in dimension four is given by
\[
\begin{array}{rcl}
W(x,y,z,w)&\!=&\!R(x,y,z,w)+\frac{\tau}6\{\g{x}{w}\g{y}{z}-\g{x}{z}\g{y}{w}\}\\
\noalign{\smallskip}
&&\!-\frac12 \{\rho(x,w)\g{y}{z}+\rho(y,z)\g{x}{w}-\rho(x,z)\g{y}{w}-\rho(y,w)\g{x}{z}\}.
\end{array}
\]
Denote by $W^\pm$ the restriction of the Weyl tensor acting on bi-vectors to $\Lambda^\pm$. A model \model is said to be self-dual if $W^-=0$ and anti-self-dual if $W^+=0$.

In Section~\ref{section:dim4} we will use the following well-known characterization of Osserman models in dimension four \cite{Gilkey-Swann-Vanhecke}.

\begin{theorem}\label{th:einstein-self-dual}
A model \model of dimension $4$ is Osserman if and only if it is Einstein and self-dual (or anti-self-dual).
\end{theorem}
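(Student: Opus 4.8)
The plan is to express the Jacobi operator of an Einstein model through the self-dual and anti-self-dual Weyl operators and then to analyse when its spectrum is forced to be constant. Since the excerpt already records that every Osserman model is Einstein, and the Einstein hypothesis appears on both sides of the statement, I would first establish a formula valid for \emph{any} Einstein model. Fix a unit vector $x$, complete it to an orthonormal basis $\{e_1=x,e_2,e_3,e_4\}$, and use \eqref{eq:basis-bivectors} to identify $x^{\perp}=\operatorname{span}\{e_2,e_3,e_4\}$ isometrically with each of $\Lambda^{+}$ and $\Lambda^{-}$ via $e_{i+1}\mapsto E_i^{\pm}$ (equivalently $y\mapsto\sqrt2\,\Pi^{\pm}(x\wedge y)$, with $\Pi^{\pm}$ the orthogonal projection onto $\Lambda^{\pm}$). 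The Einstein condition makes the trace-free Ricci contribution vanish, so the curvature operator on $\Lambda=\Lambda^{+}\oplus\Lambda^{-}$ is block diagonal and equals $W^{\pm}+\tfrac{\tau}{12}\operatorname{Id}$ on $\Lambda^{\pm}$. Computing the entries $\g{\ja{x}e_i}{e_j}=A(e_i,e_1,e_1,e_j)$ with \eqref{eq:basis-bivectors} then gives
\begin{equation}\label{eq:jacobi-weyl}
\ja{x}=\tfrac{\tau}{12}\operatorname{Id}+\tfrac12\bigl(\widetilde{W}^{+}_{x}+\widetilde{W}^{-}_{x}\bigr),
\end{equation}
where $\widetilde{W}^{\pm}_{x}$ is the pullback to $x^{\perp}$ of $W^{\pm}\colon\Lambda^{\pm}\to\Lambda^{\pm}$ under the above isometry. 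In particular $\widetilde{W}^{\pm}_{x}$ is isometrically conjugate to $W^{\pm}$, so it has exactly the eigenvalues of $W^{\pm}$.

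With \eqref{eq:jacobi-weyl} the implication ``Einstein and (anti-)self-dual $\Rightarrow$ Osserman'' is immediate. If, say, $W^{-}=0$, then $\ja{x}=\tfrac{\tau}{12}\operatorname{Id}+\tfrac12\widetilde{W}^{+}_{x}$, whose eigenvalues are $\tfrac{\tau}{12}+\tfrac12\mu_i$ with $\mu_1,\mu_2,\mu_3$ the fixed eigenvalues of $W^{+}$; these do not depend on $x$, so the model is Osserman. The case $W^{+}=0$ is symmetric.

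For the converse I would assume the model Osserman, hence Einstein, so that \eqref{eq:jacobi-weyl} applies, and prove that $W^{+}=0$ or $W^{-}=0$. The crux — and the step I expect to be the main obstacle — is to control how the two identifications $x^{\perp}\cong\Lambda^{+}$ and $x^{\perp}\cong\Lambda^{-}$ move relative to one another as $x$ varies, and here dimension four is genuinely special. A frame rotation $g\in\operatorname{SO}(4)$ acts on $\Lambda^{+}$ and on $\Lambda^{-}$ through the two factors of the two-fold cover $\operatorname{SO}(4)\to\operatorname{SO}(\Lambda^{+})\times\operatorname{SO}(\Lambda^{-})=\operatorname{SO}(3)\times\operatorname{SO}(3)$; since this map is onto, the two factors can be prescribed independently, so the relative rotation of the two identifications — which depends only on $x$ — sweeps out all of $\operatorname{SO}(3)$ as $x$ ranges over the unit sphere. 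Conjugating \eqref{eq:jacobi-weyl} by that relative rotation, the Osserman hypothesis becomes the purely algebraic statement that the spectrum of $Q^{\top}W^{+}Q+W^{-}$ is independent of $Q\in\operatorname{SO}(3)$, where now $W^{\pm}$ denote the $3\times3$ trace-free symmetric matrices of the Weyl operators.

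It then remains to show that constancy of this spectrum forces $W^{+}=0$ or $W^{-}=0$. Constancy of the spectrum forces constancy of $\operatorname{tr}\bigl((Q^{\top}W^{+}Q+W^{-})^{2}\bigr)$, hence of its only non-constant summand $\operatorname{tr}(Q^{\top}W^{+}Q\,W^{-})=\sum_i\mu_i\,\g{W^{-}v_i}{v_i}$, where $\mu_i$ are the eigenvalues of $W^{+}$ and $\{v_i\}$ is the orthonormal frame obtained by applying $Q^{\top}$ to an eigenframe of $W^{+}$. Restricting to the one-parameter family that rotates $v_1,v_2$ by an angle $\theta$ and differentiating, the vanishing of the $\sin2\theta$ and $\cos2\theta$ coefficients yields $(\mu_1-\mu_2)\g{W^{-}v_1}{v_2}=0$ and $(\mu_1-\mu_2)\bigl(\g{W^{-}v_1}{v_1}-\g{W^{-}v_2}{v_2}\bigr)=0$ for every oriented orthonormal frame, and similarly for the other coordinate planes. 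If $W^{+}\neq0$ then, being trace-free, it has two distinct eigenvalues, say $\mu_i\neq\mu_j$; rotating in the corresponding plane forces $\g{W^{-}v}{w}=0$ for every orthonormal pair $v,w$, so $W^{-}$ is a multiple of the identity and hence, being trace-free, $W^{-}=0$. Thus the model is self-dual; the alternative $W^{+}=0$ gives anti-self-dual. Together with the Einstein property this yields the stated characterisation.
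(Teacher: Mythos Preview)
The paper does not give its own proof of Theorem~\ref{th:einstein-self-dual}; the result is quoted as the ``well-known characterization of Osserman models in dimension four'' from \cite{Gilkey-Swann-Vanhecke} and is used as a black box in the proof of Theorem~\ref{th:main}. There is therefore no in-paper argument to compare your proposal against.

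That said, your argument is correct and is essentially the standard proof found in the literature. The Jacobi identity for Einstein models, the surjectivity of $\operatorname{SO}(4)\to\operatorname{SO}(\Lambda^{+})\times\operatorname{SO}(\Lambda^{-})$, and the trace-square analysis together do force $W^{+}=0$ or $W^{-}=0$. One point of phrasing could be tightened: rather than asserting that the ``relative rotation, which depends only on $x$'' sweeps out $\operatorname{SO}(3)$, it is cleaner to let the full frame $g\in\operatorname{SO}(4)$ vary (not just $x=g e_1$), observe that $(g_*^{+},g_*^{-})$ then fills all of $\operatorname{SO}(3)\times\operatorname{SO}(3)$, and note that the spectrum of $P^{\top}W^{+}P+Q^{\top}W^{-}Q$ is invariant under simultaneous conjugation by $Q$, so only the relative rotation $R=PQ^{-1}$ matters. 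In the final step, when $W^{+}\neq0$ has a repeated eigenvalue $\mu_1=\mu_2\neq\mu_3$, you should rotate in the $(v_1,v_3)$ and $(v_2,v_3)$ planes rather than $(v_1,v_2)$; since any orthonormal pair can be placed in the $(v_1,v_3)$ slot of an oriented frame, the conclusion $\g{W^{-}v}{w}=0$ for all orthonormal $v,w$ still follows. With these clarifications the argument is complete.
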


\section{The Raki\'c duality principle in dimension $3$}\label{section:dim3}

We begin by showing that the Raki\'c duality principle implies the Osserman condition for an algebraic model \model of dimension $3$; thus we have the following:

\begin{theorem}
Let \model be a $3$-dimensional algebraic model. The following two conditions are equivalent:
\begin{enumerate}
\item[(i)] \model is Osserman.
\item[(ii)] \model satisfies the Raki\'c duality principle.
\end{enumerate}
\end{theorem}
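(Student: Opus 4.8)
The plan is to prove the nontrivial implication (ii) $\Rightarrow$ (i); the converse (i) $\Rightarrow$ (ii) is the algebraic counterpart of the fact recalled in the introduction that Osserman models satisfy the Raki\'c duality principle (see \cite{R,gilkey-p-osserman,Gilkey-book}), and it can also be read off from the observation below that a $3$-dimensional Osserman model has constant sectional curvature.

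The first step is to use that the Weyl tensor vanishes in dimension $3$, so that the algebraic curvature tensor is completely determined by the Ricci tensor:
\[
A(x,y,z,w)=\rho(x,w)\g{y}{z}+\rho(y,z)\g{x}{w}-\rho(x,z)\g{y}{w}-\rho(y,w)\g{x}{z}-\tfrac{\tau}{2}\bigl(\g{x}{w}\g{y}{z}-\g{x}{z}\g{y}{w}\bigr).
\]
Substituting $u,v\in x^{\perp}$, with $x$ a unit vector, into $\g{\ja{x}u}{v}=A(u,x,x,v)$ and using $\g{u}{x}=\g{v}{x}=0$, one obtains the clean description
\[
\ja{x}|_{x^{\perp}}=\pi_{x^{\perp}}\circ\operatorname{Ric}|_{x^{\perp}}+\bigl(\rho(x,x)-\tfrac{\tau}{2}\bigr)\operatorname{Id},
\]
where $\operatorname{Ric}$ is the Ricci operator, $\g{\operatorname{Ric}(x)}{y}=\rho(x,y)$, and $\pi_{x^{\perp}}$ is the orthogonal projection onto $x^{\perp}$.

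Next I would diagonalize the Ricci operator, choosing an orthonormal basis $\{e_1,e_2,e_3\}$ with $\operatorname{Ric}(e_i)=\rho_i e_i$. The key observation is that, for every unit vector $x\perp e_i$, the vector $\operatorname{Ric}(e_i)=\rho_i e_i$ already lies in $x^{\perp}$, so $\pi_{x^{\perp}}\operatorname{Ric}(e_i)=\rho_i e_i$ and hence $\ja{x}e_i=\lambda e_i$ with $\lambda=\rho_i+\rho(x,x)-\tfrac{\tau}{2}$; in other words $e_i$ is automatically an eigenvector of $\ja{x}$ for \emph{every} unit $x\in e_i^{\perp}$. Applying the Raki\'c duality principle gives $\ja{e_i}x=\lambda x$. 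Computing the left-hand side from the description above, and using that $\operatorname{Ric}(x)\in e_i^{\perp}$ (because $\rho(x,e_i)=\rho_i\g{x}{e_i}=0$), the relation $\ja{e_i}x=\lambda x$ collapses, after cancellation, to $\operatorname{Ric}(x)=\rho(x,x)\,x$.

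Therefore every unit vector of the $2$-plane $e_i^{\perp}$ is an eigenvector of $\operatorname{Ric}$, which forces $\operatorname{Ric}|_{e_i^{\perp}}$ to be a multiple of $\operatorname{Id}$; running this for $i=1$ and $i=2$ (for instance by testing on $\tfrac{1}{\sqrt{2}}(e_1+e_2)$ and $\tfrac{1}{\sqrt{2}}(e_1+e_3)$) yields $\rho_1=\rho_2=\rho_3$, i.e. the model is Einstein. Substituting $\rho=\tfrac{\tau}{3}\g{\cdot}{\cdot}$ into the dimension-$3$ curvature formula above then shows that the model has constant sectional curvature, and a constant curvature model is visibly Osserman. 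I do not expect a genuine obstacle here: the only delicate points are the bookkeeping with the orthogonal projections and verifying that degenerate situations (some $\rho_i$ vanishing, or the interpretation of \eqref{eq:duality} when $x$ and $y$ are not orthogonal) cause no trouble; the whole weight of the argument rests on the observation that each $e_i$ is an eigenvector of $\ja{x}$ for every $x\perp e_i$, after which the duality principle does the rest.
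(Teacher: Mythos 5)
Your argument is correct, but it follows a genuinely different route from the paper. The paper works entirely at the level of Jacobi operators: it first shows $\ja{y}z=\gamma z$ by an invariant-subspace argument, then rotates within the plane $\operatorname{span}\{x,y\}$ (and then $\operatorname{span}\{y,z\}$) and applies duality to the fixed eigenvector $z$ to force $\lambda=\mu=\gamma$, concluding that $\ja{x}=\lambda(x)\operatorname{Id}$; a final duality argument shows $\lambda$ is independent of $x$. You instead exploit the dimension-specific fact that the Weyl tensor vanishes in dimension $3$, so the curvature tensor is determined by the Ricci tensor; this reduces the Jacobi operator to $\pi_{x^{\perp}}\circ\operatorname{Ric}|_{x^{\perp}}+(\rho(x,x)-\tfrac{\tau}{2})\operatorname{Id}$, and duality applied to the Ricci eigenvectors $e_i$ collapses to $\operatorname{Ric}(x)=\rho(x,x)x$ on each plane $e_i^{\perp}$, forcing the model to be Einstein and hence of constant sectional curvature. (All your computations check out; the only cosmetic slip is that $\tfrac{1}{\sqrt2}(e_1+e_2)$ tests the plane $e_3^{\perp}$, i.e.\ the case $i=3$, not $i=1$, but two of the three planes indeed suffice.) Your route is shorter and yields the stronger conclusion of constant curvature directly, at the cost of using the Ricci decomposition, which is special to dimension $3$ and does not generalize; the paper's rotation-and-duality technique is more elementary and is deliberately chosen because it is the same mechanism that drives the four-dimensional Lemmas~\ref{lemma:4-dim-basis}--\ref{lemma:self-dual}, so the three-dimensional proof serves as a warm-up for the main theorem in a way that yours does not.
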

\begin{proof}
That (i) implies (ii) was proved in \cite{R}. We assume that \model satisfies the duality in \eqref{eq:duality} to prove the converse.

Let $x$ be a unit vector, $\ja{x}$ is self-adjoint and hence diagonalizable. Denote by $\lambda$ and $\mu$ the eigenvalues of the Jacobi operator $\ja{x}$ restricted to $x^\perp$. Let $y$ and $z$ be unit eigenvectors associated to $\lambda$ and $\mu$, respectively, so that $\ja{x}y=\lambda y$ and $\ja{x}z=\mu z$. By the duality principle we also have
\[
\ja{y}x=\lambda x\qquad \text{and}\qquad \ja{z}x=\mu x\,.
\]
Since $\ja{y}$ preserves the subspace $\operatorname{span}\{x,y\}$, it also preserves $\operatorname{span}\{x,y\}^\perp$, so $\ja{y}z=\gamma z$ and, by duality, $\ja{z}y=\gamma y$, for a certain eigenvalue $\gamma$.

Compute
\[
\ja{\cos\theta x+\sin\theta y}(-\sin\theta x+\cos\theta y)=\lambda (-\sin\theta x+\cos\theta y)
\]
to see that $\ja{\cos\theta x+\sin\theta y}$ leaves  $\operatorname{span}\{x,y\}$ invariant. So $z$ is an eigenvector for $\ja{\cos\theta x+\sin\theta y}$ and there exists $\alpha$ such that $\ja{\cos\theta x+\sin\theta y}z=\alpha z$. Hence, by \eqref{eq:duality}, we have:
\[
\alpha(\cos\theta x+\sin\theta y)=\ja{z}(\cos\theta x+\sin\theta y)=\cos\theta \mu x+\sin \theta \gamma y\,.
\]
Since $x$ and $y$ are linearly independent we get that $\alpha=\mu=\gamma$.
We repeat the same argument for $\cos\theta y+\sin\theta z$ to see that, indeed, $\lambda=\mu=\gamma$.

Since $x$ was chosen arbitrarily we have just shown that $\ja{x}=\lambda(x) \operatorname{Id}$ for every unit vector $x\in V$, where $\lambda(x)$ is a function of $x$. In order to finish the proof we must show that $\lambda$ is constant. Take $x$ and $y$ unit vectors. There exists $z\perp x,y$ so that $\ja{x}z=\lambda(x)z$ and $\ja{y}z=\lambda(y)z$. By the Raki\'c duality principle we have that $\ja{z}x=\lambda(z)x=\lambda(x)x$ and that $\ja{z}y=\lambda(z)y=\lambda(y)y$. Hence $\lambda(x)=\lambda(z)=\lambda(y)$ and \model is Osserman.
\end{proof}

\section{Proof of Theorem~\ref{th:main}}\label{section:dim4}

Theorem~\ref{th:main} will be a consequence of the following sequence of lemmas. We begin by choosing an appropriate basis for our subsequent analysis.

\begin{lemma}\label{lemma:4-dim-basis}
Let \model be a $4$-dimensional algebraic model which satisfies the Raki\'c duality principle. Then every unit vector $x\in V$ can be completed to an orthonormal basis $\{x,y,z,w\}$ such that
\begin{equation}\label{eq:eigenvalues}
\begin{array}{c}
\ja{x}y=\lambda_1 y,\qquad \ja{x}z=\lambda_2 z,\qquad \ja{x}w=\lambda_3 w,\\
\ja{y}z=\lambda_4 z,\qquad \ja{y}w=\lambda_5 w,\qquad \ja{z}w=\lambda_6 w,
\end{array}
\end{equation}
where $\lambda_1$, $\lambda_2$, $\lambda_3$, $\lambda_4$, $\lambda_5$ and $\lambda_6$ are real numbers.
\end{lemma}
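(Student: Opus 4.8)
The plan is to construct the basis explicitly, by first diagonalising $\ja{x}$ and then refining the result with one auxiliary Jacobi operator. Since $\ja{x}$ is self-adjoint on the $3$-dimensional space $x^\perp$, I would choose an orthonormal basis $\{e_2,e_3,e_4\}$ of $x^\perp$ with $\ja{x}e_i=\mu_i e_i$ for real numbers $\mu_2,\mu_3,\mu_4$. Applying \eqref{eq:duality} to each of these relations yields $\ja{e_i}x=\mu_i x$; in particular $\ja{e_2}$ fixes $\mathbb{R}x$ and, being self-adjoint, preserves $\operatorname{span}\{e_3,e_4\}=x^\perp\cap e_2^\perp$. I would then set $y:=e_2$ and diagonalise the self-adjoint operator $\ja{y}|_{\operatorname{span}\{e_3,e_4\}}$, obtaining an orthonormal pair $z,w\in\operatorname{span}\{e_3,e_4\}$ with $\ja{y}z=\lambda_4 z$ and $\ja{y}w=\lambda_5 w$; duality then gives $\ja{z}y=\lambda_4 y$ and $\ja{w}y=\lambda_5 y$.

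The heart of the argument is to show that $z$ and $w$ are automatically eigenvectors of $\ja{x}$, which I would do via the identity $A(x,z,z,w)=\g{\ja{z}x}{w}=0$. Writing $z=\cos\phi\,e_3+\sin\phi\,e_4$ and $w=-\sin\phi\,e_3+\cos\phi\,e_4$ and expanding $\ja{z}x=A(x,z)z$ multilinearly, each resulting term either has a repeated entry in the last two slots of $A$ (hence vanishes by antisymmetry) or is brought, possibly using that same antisymmetry, to the form $A(x,e_i,e_i,v)=\g{\ja{e_i}x}{v}=\mu_i\g{x}{v}$ with $v\perp x$, hence again vanishes; so the whole sum is $0$. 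Since moreover $\g{\ja{z}x}{y}=\g{x}{\ja{z}y}=\lambda_4\g{x}{y}=0$ and $\g{\ja{z}x}{z}=0$ trivially, we obtain $\ja{z}x=\lambda_2 x$ for some $\lambda_2\in\mathbb{R}$, and \eqref{eq:duality} gives $\ja{x}z=\lambda_2 z$. As $\ja{x}$ preserves $\operatorname{span}\{e_3,e_4\}$ and $z$ is an eigenvector lying in it, its orthogonal companion satisfies $\ja{x}w=\lambda_3 w$. Together with $\ja{x}y=\mu_2 y$, these are the first three relations of \eqref{eq:eigenvalues}.

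To close, I would note that $\ja{z}$ is self-adjoint on $z^\perp=\operatorname{span}\{x,y,w\}$ and has both $x$ and $y$ as eigenvectors (from $\ja{z}x=\lambda_2 x$ and $\ja{z}y=\lambda_4 y$), so it preserves the remaining line $\mathbb{R}w$, giving $\ja{z}w=\lambda_6 w$. This produces all six relations of \eqref{eq:eigenvalues} in the orthonormal basis $\{x,y,z,w\}$, completing the proof.

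I expect the only genuine obstacle to be the vanishing identity $A(x,z,z,w)=0$; everything else reduces to the repeated elementary observation that a self-adjoint operator preserves the orthogonal complement of any of its eigenvectors, together with systematic use of \eqref{eq:duality}. It is worth emphasising that no case analysis on the multiplicities of $\mu_2,\mu_3,\mu_4$ is required: the pair $z,w$ is obtained by diagonalising $\ja{y}$ rather than $\ja{x}$, and the vanishing identity above holds for \emph{any} orthonormal pair spanning $\operatorname{span}\{e_3,e_4\}$.
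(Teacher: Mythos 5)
Your proof is correct, and while it shares the overall skeleton with the paper's argument (diagonalise $\ja{x}$, use duality to see that $\ja{y}$ preserves a $2$-plane, diagonalise there, reconcile the two diagonalisations, and finish with the observation that a self-adjoint operator with $x$ and $y$ as eigenvectors must also have $w$ as one), the reconciliation step --- the only nontrivial point --- is handled quite differently. The paper keeps the original eigenbasis $\{y,z,w\}$ of $\ja{x}$ and proves that it already diagonalises $\ja{y}$ on $\operatorname{span}\{z,w\}$: it takes the a priori unknown eigenvectors $az+bw$ and $bz-aw$ of $\ja{y}|_{\operatorname{span}\{z,w\}}$, applies the duality principle a \emph{second} time to get $\ja{az+bw}y=\alpha y$ and $\ja{bz-aw}y=\beta y$, and sums the two expanded identities so that the cross terms cancel, yielding $(\alpha+\beta)y=\ja{z}y+\ja{w}y$ and hence $\g{\ja{z}y}{w}=\g{\ja{w}y}{z}=0$. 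You instead keep the eigenbasis $\{z,w\}$ of $\ja{y}|_{\operatorname{span}\{e_3,e_4\}}$ and show it is still an eigenbasis of $\ja{x}$ via the identity $A(x,z,z,w)=0$, which you verify by brute multilinear expansion using only the curvature symmetries \eqref{eq:sim-curv} and the first-round duality relations $\ja{e_i}x=\mu_i x$ (every term $A(x,e_i,e_j,e_k)$ with $i,j,k\in\{3,4\}$ vanishes, since either $j=k$ or the term reduces to $\mu_i\g{x}{v}$ with $v\perp x$); I checked this expansion and it is sound. What your route buys is that duality is invoked only on genuine coordinate eigenvectors, never on the unknown combinations $az+bw$, and no cancellation-by-summation trick is needed; what the paper's route buys is that it never descends to a coordinate expansion of the curvature tensor, and the averaging identity $(\alpha+\beta)y=\ja{z}y+\ja{w}y$ it produces is reused in the proof of Lemma~\ref{lemma:einstein}, so the two computations are not interchangeable later in the paper even though they are for this lemma.
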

\begin{proof}
Let $x$ be a unit vector and let $y$, $z$, $w$ be a basis of unit eigenvectors associated to $\ja{x}$, i.e.
\[
\ja{x}y=\lambda_1 y,\qquad \ja{x}z=\lambda_2 z,\qquad \ja{x}w=\lambda_3 w\,.
\]
Since $\ja{y}x=\lambda_1 x$ and $\ja{y}y=0$, we have that $\ja{y}span\{z,w\}\subset span\{z,w\}$. Hence there exist $a,b\in \mathbb{R}$ such that $a^2+b^2=1$ and
\[
\ja{y}(az+bw)=\alpha (az+bw)\,,\qquad \ja{y}(bz-aw)=\beta (bz-aw)\,.
\]
By the Raki\'c duality principle
\[
\begin{array}{rcl}
\alpha y&=&\ja{az+bw}(y)=a^2\ja{z}y+b^2\ja{w}y+ab\{A(y,z)w+A(y,w)z\},\\
\noalign{\smallskip}
\beta y&=&\ja{bz-aw}(y)=b^2\ja{z}y+a^2\ja{w}y-ab\{A(y,z)w+A(y,w)z\}.
\end{array}
\]
Sum both equation to get
\[
(\alpha+\beta)y=a^2\ja{z}y+b^2\ja{w}y+b^2\ja{z}y+a^2\ja{w}y=\ja{z}y+\ja{w}y\,.
\]
We already had that $\g{\ja{z}y}{x}=\g{\ja{w}y}{x}=0$ because $x$ is an eigenvector for $\ja{z}$ and $\ja{w}$, and these are self-adjoint. Take the inner product of the previous expression with respect to $z$ and $w$ to see that $\g{\ja{z}y}{w}=\g{\ja{w}y}{z}=0$. Therefore, $\ja{z}y=\lambda_4 y$ and $\ja{w}y=\lambda_5 y$ for certain $\lambda_4,\lambda_5\in \mathbb{R}$. Now the Raki\'c duality principle implies that $\ja{y}z=\lambda_4 z$ and $\ja{y}w=\lambda_5 w$. Finally, since $x$ and $y$ are eigenvectors for $\ja{z}$, so is $w$ which generates the orthogonal complement of $\operatorname{span}\{x,y\}$ in $z^\perp$. Hence $\ja{z}w=\lambda_6 w$ for a certain $\lambda_6\in \mathbb{R}$.
\end{proof}

\begin{lemma}\label{lemma:einstein}
Let \model be a $4$-dimensional model. If \model satisfies the Raki\'c duality principle then it is Einstein.
\end{lemma}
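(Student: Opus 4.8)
The plan is to use the orthonormal basis $\{x,y,z,w\}$ provided by Lemma~\ref{lemma:4-dim-basis} and compute the Ricci tensor in this basis, showing it is a multiple of the identity. The diagonal entries are immediate: $\rho(x,x)=\g{\ja{x}y}{y}+\g{\ja{x}z}{z}+\g{\ja{x}w}{w}=\lambda_1+\lambda_2+\lambda_3$, and similarly $\rho(y,y)=\lambda_1+\lambda_4+\lambda_5$, $\rho(z,z)=\lambda_2+\lambda_4+\lambda_6$, $\rho(w,w)=\lambda_3+\lambda_5+\lambda_6$, using that $\ja{x}$, $\ja{y}$, $\ja{z}$, $\ja{w}$ are all diagonal in this basis. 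So the first task is to show these four quantities coincide, i.e. that $\lambda_1=\lambda_6$, $\lambda_2=\lambda_5$, $\lambda_3=\lambda_4$ (subtracting pairs of the equations gives exactly these).

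To get those relations I would exploit the freedom in the choice of basis. Since $\{x,y,z,w\}$ diagonalizes all the Jacobi operators listed, for a rotation in the $\operatorname{span}\{x,y\}$ plane the vector $u_\theta=\cos\theta\, x+\sin\theta\, y$ still has $\ja{u_\theta}$ leaving $\operatorname{span}\{x,y\}$ invariant (the computation $\ja{u_\theta}(-\sin\theta\,x+\cos\theta\,y)$ stays in the plane, just as in the dimension-three argument), hence $\ja{u_\theta}z$ and $\ja{u_\theta}w$ lie in $\operatorname{span}\{z,w\}$. Expanding $\ja{u_\theta}z=\cos^2\theta\,\ja{x}z+\sin^2\theta\,\ja{y}z+\cos\theta\sin\theta\{A(z,x)y+A(z,y)x\}$ and taking the inner product with $w$ forces the mixed term $\g{A(z,x)y+A(z,y)x}{w}=A(z,x,y,w)+A(z,y,x,w)$ to vanish; by the first Bianchi identity this equals $-A(x,y,z,w)$, so $A(x,y,z,w)=0$, and by the same reasoning applied to the other coordinate planes all such "fully mixed" components vanish. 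Feeding this back, $\ja{u_\theta}z=(\cos^2\theta\,\lambda_2+\sin^2\theta\,\lambda_4)z$, and then the duality principle applied to $u_\theta$ and $z$ (as in the dimension-three proof) yields $\ja{z}u_\theta=(\cos^2\theta\,\lambda_2+\sin^2\theta\,\lambda_4)u_\theta$; but the left side is $\cos\theta\,\ja{z}x+\sin\theta\,\ja{z}y=\cos\theta\,\lambda_2\,x+\sin\theta\,\lambda_4\,y$, so comparing $x$- and $y$-components gives $\lambda_2=\lambda_4$ and, more to the point, after running the analogous argument in every relevant plane, the desired coincidences among the $\lambda_i$.

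Having shown $\rho(x,x)=\rho(y,y)=\rho(z,z)=\rho(w,w)=:c$, it remains to check the off-diagonal Ricci components vanish, e.g. $\rho(x,y)=\g{\ja{x}?}{?}$-type sums; concretely $\rho(x,y)=\sum_i A(e_i,x,y,e_i)$ over the basis, and each summand is controlled either by the already-established vanishing of fully-mixed curvature components or by symmetries of $A$ together with the fact that the Jacobi operators are diagonal (for instance $A(z,x,y,z)=\g{\ja{z}x}{?}$ reduces to something proportional to $\g{x}{y}=0$). I expect the main obstacle to be organizing this last bookkeeping cleanly: one must be careful that the relations derived from the one-parameter rotations in one coordinate plane are genuinely enough to kill \emph{all} the off-diagonal Ricci entries and not merely those in a single plane, which may require invoking the duality principle for a generic unit vector rather than only for the rotations within the three coordinate planes $\operatorname{span}\{x,y\}$, $\operatorname{span}\{x,z\}$, $\operatorname{span}\{x,w\}$. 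Once that is handled, $\rho=c\g{\cdot}{\cdot}$ and the model is Einstein.
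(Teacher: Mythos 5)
There is a genuine gap, and it sits at the heart of your argument. From the fact that $\ja{u_\theta}$ preserves $\operatorname{span}\{z,w\}$ you conclude that taking the inner product of $\ja{u_\theta}z$ with $w$ ``forces the mixed term $A(z,x,y,w)+A(z,y,x,w)$ to vanish.'' It does not: a self-adjoint operator preserving the plane $\operatorname{span}\{z,w\}$ need not be diagonal in the basis $\{z,w\}$; its eigenvectors are in general a rotated pair $az+bw$, $bz-aw$, and the off-diagonal entry $\g{\ja{u_\theta}z}{w}=\cos\theta\sin\theta\,\{A(z,x,y,w)+A(z,y,x,w)\}$ can be nonzero. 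This is precisely the subtlety the paper's proof is built around: it introduces the unknown rotation parameters $a,b$ with $a^2+b^2=1$, applies duality to the genuine eigenvectors $az+bw$ and $bz-aw$, and obtains only the weaker relation $\lambda_2+\lambda_3=\lambda_4+\lambda_5$ together with a dichotomy ($a^2=b^2$, or $\lambda_2=\lambda_4$ and $\lambda_3=\lambda_5$), which must then be combined across the three coordinate planes to pin down the eigenvalue structure.

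The conclusion you draw from the erroneous step is in fact false: the fully mixed components do not vanish for Osserman models in general. For the model of $\mathbb{CP}^2$ (eigenvalues $\lambda_1=4$, $\lambda_2=\lambda_3=1$), which is Osserman and hence satisfies the duality principle, one has $A(x,z,w,y)=\frac{-\lambda_1+2\lambda_2-\lambda_3}{3}=-1\neq 0$, as the computation in Lemma~\ref{lemma:self-dual} shows. So your argument would prove something false, and the subsequent deduction $\lambda_2=\lambda_4$ (rather than the correct relations $\lambda_1=\lambda_6$, $\lambda_2=\lambda_5$, $\lambda_3=\lambda_4$ of the paper's case e)) is likewise too strong. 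Your overall strategy — equate the diagonal Ricci entries by exploiting rotations in coordinate planes — is the right one, and your worry about the off-diagonal Ricci entries is actually unfounded (they vanish immediately because each $\ja{e_i}$ is diagonal in the basis of Lemma~\ref{lemma:4-dim-basis}, so $\rho(u,v)=\sum_i\g{\ja{e_i}u}{v}=0$ for distinct basis vectors $u\neq v$); but the diagonal part needs the more careful eigenvector bookkeeping described above.
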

\begin{proof}
We adopt notation in Lemma~\ref{lemma:4-dim-basis}.
For any $\theta\in [0,2\pi)$, set $r_\theta=\cos \theta x+\sin\theta y$. Note that $\ja{r_\theta}(\sin\theta x-\cos\theta y)=\lambda_1 (\sin\theta x-\cos\theta y)$. Hence $\ja{r_\theta}\operatorname{span}\{z,w\}\subset \operatorname{span}\{z,w\}$ and there exist $a,b\in \mathbb{R}$ with $a^2+b^2=1$ such that
\[
\ja{r_\theta}(az+bw)=\alpha (az+bw),\quad \ja{r_\theta}(bz-aw)=\beta (bz-aw),
\]
for certain $\alpha$ and $\beta$ real numbers.
Expand
\begin{equation}\label{eq:expand-jacobi}
\ja{r_\theta}=\cos^2 \theta\ja{x}+\sin^2\theta\ja{y}+\cos\theta \sin\theta\{A(\cdot,x)y+A(\cdot,y)x\}\,.
\end{equation}
Now, we particularize $\theta=\frac{\pi}{4}$ to set $r=r_\frac{\pi}4=\frac{x+y}{\sqrt{2}}$. Using expressions in \eqref{eq:eigenvalues} and equation \eqref{eq:expand-jacobi} we compute
\[
\begin{array}{rcl}
\ja{r}(az+bw)&=&\frac{a}2\{ \lambda_2z+\lambda_4z+A(z,x)y+A(z,y)x\}\\
\noalign{\smallskip}
&&+\frac{b}2\{ \lambda_3w+\lambda_5w+A(w,x)y+A(w,y)x\}
\end{array}
\]
and
\[
\begin{array}{rcl}
\ja{r}(bz-aw)&=&\frac{b}2\{ \lambda_2z+\lambda_4z+A(z,x)y+A(z,y)x\}\\
\noalign{\smallskip}
&&-\frac{a}2\{ \lambda_3w+\lambda_5w+A(w,x)y+A(w,y)x\}.
\end{array}
\]
Taking the inner product with $z$ and $w$ we obtain the following equations:
\begin{equation}\label{eq:rel-rak1}
\begin{array}{rcl}
\alpha a&=&\frac{a}2\lambda_2+\frac{a}2\lambda_4+\frac{b}2\{A(w,x,y,z)+A(w,y,x,z)\},\\
\noalign{\smallskip}
\beta b&=&\frac{b}2\lambda_2+\frac{b}2\lambda_4-\frac{a}2\{A(w,x,y,z)+A(w,y,x,z)\},\\
\noalign{\smallskip}
\alpha b&=&\frac{b}2\lambda_3+\frac{b}2\lambda_5+\frac{a}2\{A(z,x,y,w)+A(z,y,x,w)\},\\
\noalign{\smallskip}
\beta a&=&\frac{a}2\lambda_3+\frac{a}2\lambda_5-\frac{b}2\{A(z,x,y,w)+A(z,y,x,w)\}.
\end{array}
\end{equation}
On the other hand, apply the Raki\'c duality principle to see that $\ja{az+bw}(x+y)=\alpha (x+y)$ and $\ja{bz-aw}(x+y)=\beta (x+y)$. Expanding we get
\[
\begin{array}{rcl}
\alpha(x+y)&=&a^2\lambda_2x+b^2\lambda_3 x+ab\{A(x,z)w+A(x,w)z\}\\
\noalign{\smallskip}
&&+a^2\lambda_4y+b^2\lambda_5 y+ab\{A(y,z)w+A(y,w)z\}
\end{array}
\]
and
\[
\begin{array}{rcl}
\beta(x+y)&=&b^2\lambda_2x+a^2\lambda_3 x-ab\{A(x,z)w+A(x,w)z\}\\
\noalign{\smallskip}
&&+b^2\lambda_4y+a^2\lambda_5 y-ab\{A(y,z)w+A(y,w)z\}.
\end{array}
\]
Sum both expressions to see that
\[
\lambda_2x+\lambda_3x+\lambda_4y+\lambda_5y=(\alpha+\beta)(x+y).
\]
Since $x$ and $y$ are linearly independent we obtain $\alpha+\beta=\lambda_2+\lambda_3=\lambda_4+\lambda_5$, so
\begin{equation}\label{eq:relation-1}
\lambda_2+\lambda_3-\lambda_4-\lambda_5=0.
\end{equation}
Take the inner product with $x$ and $y$ in the expressions above to obtain:
\begin{align}
\label{eq:array1} \alpha&=a^2\lambda_2+b^2\lambda_3+ab \{A(y,z,w,x)+A(y,w,z,x)\},\\ \label{eq:array2} \alpha&=a^2\lambda_4+b^2\lambda_5+ab \{A(x,z,w,y)+A(x,w,z,y)\},\\
\label{eq:array3} \beta&=b^2\lambda_2+a^2\lambda_3-ab \{A(y,z,w,x)+A(y,w,z,x)\},\\
\label{eq:array4} \beta&=b^2\lambda_4+a^2\lambda_5-ab \{A(x,z,w,y)+A(x,w,z,y)\}.
\end{align}
Compute \eqref{eq:array1}-\eqref{eq:array2}-\eqref{eq:array3}+\eqref{eq:array4} to obtain the following equation:
\begin{equation}\label{eq:relation-2}
(a^2-b^2)(\lambda_2-\lambda_4-\lambda_3+\lambda_5)=0.
\end{equation}
Now, from \eqref{eq:relation-1} and \eqref{eq:relation-2} we get two possibilities:
\begin{itemize}
\item $a^2=b^2$, which implies $\lambda_2=\lambda_5$ and $\lambda_3=\lambda_4$ by (\ref{eq:rel-rak1}), or
\item $\lambda_2=\lambda_4$ and $\lambda_3=\lambda_5$.
\end{itemize}

Repeat the previous argument interchanging $y$ by $z$ to see that:
\begin{itemize}
\item $\lambda_1=\lambda_4$ and $\lambda_3=\lambda_6$, or
\item $\lambda_1=\lambda_6$ and $\lambda_3=\lambda_4$,
\end{itemize}
and interchanging $y$ by $w$ to see that
\begin{itemize}
\item $\lambda_1=\lambda_5$ and $\lambda_2=\lambda_6$, or
\item $\lambda_1=\lambda_6$ and $\lambda_2=\lambda_5$.
\end{itemize}

In summary, combine the possibilities above to see that the possible eigenvalue structures are:
\begin{enumerate}
\item[a)] $\lambda_1=\lambda_2=\lambda_3=\lambda_4=\lambda_5=\lambda_6$,
\item[b)] $\lambda_1=\lambda_6$ and $\lambda_2=\lambda_3=\lambda_4=\lambda_5$,
\item[c)] $\lambda_2=\lambda_5$ and $\lambda_1=\lambda_3=\lambda_4=\lambda_6$,
\item[d)] $\lambda_3=\lambda_4$ and $\lambda_1=\lambda_2=\lambda_5=\lambda_6$,
\item[e)] $\lambda_1=\lambda_6$, $\lambda_2=\lambda_5$ and $\lambda_3=\lambda_4$.
\end{enumerate}
Now note that $\rho(u,v)=\g{\ja{x}u}{v}+\g{\ja{y}u}{v}+\g{\ja{z}u}{v}+\g{\ja{w}u}{v}=0$ for $u,v\in\{x,y,z,w\}$ with $u\neq v$. Also, since $\rho(v,v)=Tr\{\ja{v}\}$,  it is straightforward to see that the diagonal components of the Ricci tensor $\rho$ are given by
\begin{align*}
\rho(x,x)&=\lambda_1+\lambda_2+\lambda_3,&\rho(y,y)&=\lambda_1+\lambda_4+\lambda_5=\lambda_1+\lambda_2+\lambda_3,\\
\rho(z,z)&=\lambda_2+\lambda_4+\lambda_6=\lambda_1+\lambda_2+\lambda_3,&\rho(w,w)&=\lambda_3+\lambda_5+\lambda_6=\lambda_1+\lambda_2+\lambda_3.
\end{align*}
Hence $\rho(\cdot,\cdot)=(\lambda_1+\lambda_2+\lambda_3)\g{\cdot}{\cdot}$ and \model is Einstein.
\end{proof}

We continue our study of an Einstein model taking advantage of the previous results. Our current task is to find out the mixed terms of the curvature, this is, those which involve vectors $x$, $y$, $z$ and $w$. Observe that Case $e)$ in the proof of Lemma~\ref{lemma:einstein} is more general than Cases $a)$, $b)$, $c)$, $d)$. Thus, we assume henceforth that $\lambda_1=\lambda_6$, $\lambda_2=\lambda_5$ and $\lambda_3=\lambda_4$ so all the possible cases are considered at once.

\begin{lemma}\label{lemma:self-dual}
Let \model be a $4$-dimensional Einstein model which satisfies the Raki\'c duality principle. Then \model is self-dual or anti-self-dual.
\end{lemma}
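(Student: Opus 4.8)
The plan is to compute, in the orthonormal basis produced by Lemma~\ref{lemma:4-dim-basis}, the matrices of the self-dual and anti-self-dual Weyl operators $W^{+}$ and $W^{-}$, and to show that one of them is the zero operator. We keep the notation and the normalisation fixed just above the statement, so $\{x,y,z,w\}$ is orthonormal, \eqref{eq:eigenvalues} holds, and $\lambda_1=\lambda_6$, $\lambda_2=\lambda_5$, $\lambda_3=\lambda_4$. The first step is to list the curvature components: each Jacobi operator $\ja{v}$ with $v\in\{x,y,z,w\}$ is self-adjoint and, by the choice of basis, diagonalised by $\{x,y,z,w\}$, so every component $A(a,b,c,d)$ in which exactly three of the letters $x,y,z,w$ appear vanishes. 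Hence $A$ is determined by the $\lambda_i$ together with the three totally mixed numbers
\[
P:=A(x,y,z,w),\qquad Q:=A(x,z,w,y),\qquad S:=A(x,w,y,z),
\]
which by the first Bianchi identity in \eqref{eq:sim-curv} satisfy $P+Q+S=0$.

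The main step is to constrain $P,Q,S$ via the duality principle. Fix the plane $\operatorname{span}\{x,y\}$ and set $r_\theta=\cos\theta\,x+\sin\theta\,y$. As in the proof of Lemma~\ref{lemma:einstein}, $\ja{r_\theta}$ leaves $\operatorname{span}\{z,w\}$ invariant, and a direct computation shows that its matrix there, in the basis $\{z,w\}$, is $F(\cos\theta,\sin\theta)$, where
\[
F(p,q)=\begin{pmatrix} p^{2}\lambda_2+q^{2}\lambda_3 & pq\,(Q-S)\\ pq\,(Q-S) & p^{2}\lambda_3+q^{2}\lambda_2\end{pmatrix}\qquad(p^{2}+q^{2}=1);
\]
dually, for a unit vector $v=az+bw$ the operator $\ja{v}$ leaves $\operatorname{span}\{x,y\}$ invariant, with matrix $F(a,b)$ in the basis $\{x,y\}$. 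Now if $(a,b)$ spans an eigenvector of $F(\cos\theta,\sin\theta)$ with eigenvalue $\alpha$, the Raki\'c duality principle forces $(\cos\theta,\sin\theta)$ to span an eigenvector of $F(a,b)$ with the same eigenvalue $\alpha$. Writing out the eigenvector condition for a symmetric $2\times 2$ matrix, and putting $(a,b)=(\cos\sigma,\sin\sigma)$, these two statements become
\begin{align*}
(\lambda_2-\lambda_3)\cos 2\theta\,\sin 2\sigma&=(Q-S)\sin 2\theta\,\cos 2\sigma,\\
(\lambda_2-\lambda_3)\sin 2\theta\,\cos 2\sigma&=(Q-S)\cos 2\theta\,\sin 2\sigma;
\end{align*}
reading this as a homogeneous linear system in the never-vanishing vector $(\sin 2\sigma,\cos 2\sigma)$, the determinant $\bigl((\lambda_2-\lambda_3)^{2}-(Q-S)^{2}\bigr)\sin 2\theta\cos 2\theta$ must be zero, so choosing $\theta$ with $\sin 2\theta\cos 2\theta\neq0$ gives $(Q-S)^{2}=(\lambda_2-\lambda_3)^{2}$. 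The identical argument in the planes $\operatorname{span}\{x,z\}$ and $\operatorname{span}\{x,w\}$ yields $(S-P)^{2}=(\lambda_1-\lambda_3)^{2}$ and $(P-Q)^{2}=(\lambda_1-\lambda_2)^{2}$.

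To conclude, pass to bi-vectors. In the basis \eqref{eq:basis-bivectors} built from $\{x,y,z,w\}$, the operator $\mathcal{R}$ on bi-vectors induced by $A$ is, again because all three-letter components vanish and $\lambda_1=\lambda_6$, $\lambda_2=\lambda_5$, $\lambda_3=\lambda_4$, diagonal on $\Lambda^{+}$ and on $\Lambda^{-}$:
\[
\mathcal{R}|_{\Lambda^{+}}=\operatorname{diag}(\lambda_1-P,\ \lambda_2-Q,\ \lambda_3-S),\qquad \mathcal{R}|_{\Lambda^{-}}=\operatorname{diag}(\lambda_1+P,\ \lambda_2+Q,\ \lambda_3+S).
\]
Since the model is Einstein, $W^{+}$ (resp.\ $W^{-}$) vanishes exactly when $\mathcal{R}|_{\Lambda^{+}}$ (resp.\ $\mathcal{R}|_{\Lambda^{-}}$) is a multiple of the identity, i.e.\ when $\lambda_1-P=\lambda_2-Q=\lambda_3-S$ (resp.\ $\lambda_1+P=\lambda_2+Q=\lambda_3+S$). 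Combining $P+Q+S=0$ with the three square identities finishes the proof by a short case analysis. If $\lambda_1,\lambda_2,\lambda_3$ are pairwise distinct, the signs in $P-Q=\pm(\lambda_1-\lambda_2)$, $Q-S=\pm(\lambda_2-\lambda_3)$ and $S-P=\pm(\lambda_1-\lambda_3)$ cannot be mismatched, because the three left-hand sides add up to zero so a mismatch would force two of the $\lambda_i$ to agree; the two coherent sign patterns are precisely $W^{+}=0$ and $W^{-}=0$. If instead two of the $\lambda_i$ agree, say $\lambda_2=\lambda_3$, then $Q=S$, $P=-2Q$ and $9Q^{2}=(\lambda_1-\lambda_3)^{2}$, and each of the two possible values of $Q$ makes one of $\mathcal{R}|_{\Lambda^{+}}$, $\mathcal{R}|_{\Lambda^{-}}$ scalar; the cases $\lambda_1=\lambda_2$ and $\lambda_1=\lambda_3$ are symmetric.

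The step I expect to be the main obstacle is the derivation of $(Q-S)^{2}=(\lambda_2-\lambda_3)^{2}$ and its two companions: one must translate the duality principle faithfully into the $2\times 2$ matrix picture, keep careful track of the signs of $P,Q,S$ under the symmetries \eqref{eq:sim-curv} (so that the matrices $F$ genuinely have the asserted form), and dispose of the degenerate values of $\theta$ and of $\lambda_2-\lambda_3$. Once these relations are available the passage to bi-vectors and the final case analysis are routine bookkeeping.
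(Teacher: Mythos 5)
Your proposal is correct and follows essentially the same route as the paper: it applies the duality principle to the rotated vectors $\cos\theta\,x+\sin\theta\,y$ and to eigenvectors in $\operatorname{span}\{z,w\}$ to pin down the totally mixed curvature components up to sign, and then verifies (anti-)self-duality. Your $2\times 2$ matrix/determinant formulation (valid for generic $\theta$ rather than the paper's specific $\theta=\pi/6$) and the coherent-sign argument via $P+Q+S=0$ are clean repackagings of the paper's explicit computations and of its sign normalization by flipping basis vectors.
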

\begin{proof}
Adopt the notation of Lemmas~\ref{lemma:4-dim-basis} and \ref{lemma:einstein} as concerns the basis $\{x,y,z,w\}$ and the corresponding eigenvalues for the Jacobi operator. Moreover, assume that $\lambda_1=\lambda_6$, $\lambda_2=\lambda_5$ and $\lambda_3=\lambda_4$. In summary, we consider a basis $\{x,y,z,w\}$ such that
\[
\begin{array}{c}
\ja{x}y=\lambda_1 y,\qquad \ja{x}z=\lambda_2 z,\qquad \ja{x}w=\lambda_3 w,\\
\ja{y}z=\lambda_3 z,\qquad \ja{y}w=\lambda_2 w,\qquad \ja{z}w=\lambda_1 w.
\end{array}
\]

Recall notation $r_\theta=\cos \theta x+\sin\theta y$ from Lemma~\ref{lemma:einstein} and consider $\theta=\frac{\pi}{6}$ so $s=r_\frac{\pi}6=\frac{\sqrt{3}}2x+\frac12y$. We repeat a previous argument to see that there exist $a$ and $b$, with $a^2+b^2=1$ and $a,b> 0$ ($a$ and $b$ cannot be $0$ as a consequence of the Raki\'c duality principle; if $a<0$ or $b<0$ change $z$ by $-z$ or $w$ by $-w$ to rearrange signs) such that
\[
\ja{s}(az+bw)=\alpha (az+bw)
\]
for $\alpha$ a real number.
Expand the previous expression to get
\[
\begin{array}{rcl}
\alpha(az+bw)&=&\frac34a\lambda_2z+\frac34b\lambda_3w+\frac14a\lambda_3z+\frac14b\lambda_2w\\
\noalign{\smallskip}
&&+\frac{\sqrt{3}}4a\{A(z,x)y+A(z,y)x\}+\frac{\sqrt{3}}4b\{A(w,x)y+A(w,y)x\}.
\end{array}
\]
Take the inner product with $z$ and $w$ to obtain the following equations:
\begin{equation}\label{eq:aut-1}
\begin{array}{rcl}
\alpha \, a&=&\frac34a\lambda_2+\frac14a\lambda_3+\frac{\sqrt{3}}4b\{A(w,x,y,z)+A(w,y,x,z)\},\\
\noalign{\smallskip}
\alpha\,b&=&\frac34b\lambda_3+\frac14b\lambda_2+\frac{\sqrt{3}}4a\{A(z,x,y,w)+A(z,y,x,w)\}.
\end{array}
\end{equation}

Apply the Raki\'c duality principle
to see that $\ja{az+bw}s=\alpha s$
and expand
\[
\begin{array}{rcl}
\alpha(\frac{\sqrt{3}}2x+\frac12y)&=&\frac{\sqrt{3}}2a^2\lambda_2x+\frac12 a^2 \lambda_3y+\frac{\sqrt{3}}2 b^2 \lambda_3x+\frac{1}2 b^2\lambda_2y\\
\noalign{\smallskip}
&&+\frac{\sqrt{3}}2ab\{A(x,z)w+A(x,w)z\}+\frac{1}2ab\{A(y,z)w+A(y,w)z\}.
\end{array}
\]
Now take the inner product with $x$ and $y$ to obtain:
\begin{equation}\label{eq:aut-2}
\begin{array}{rcl}
\frac{\sqrt{3}}2 \alpha&=&\frac{\sqrt{3}}2a^2\lambda_2+\frac{\sqrt{3}}2 b^2 \lambda_3+\frac{1}2ab\{A(y,z,w,x)+A(y,w,z,x)\},\\
\noalign{\smallskip}
\frac{1}2 \alpha&=&\frac12 a^2 \lambda_3+\frac{1}2 b^2\lambda_2+
\frac{\sqrt{3}}2ab\{A(x,z,w,y)+A(x,w,z,y)\}.
\end{array}
\end{equation}

On the one hand, from \eqref{eq:aut-1} we obtain the following relation on the eigenvalues:
\[
\frac{a}b(\alpha  -\frac34\lambda_2-\frac14\lambda_3)=\frac{b}a(\alpha-\frac34\lambda_3-\frac14\lambda_2),
\]
that we can write as
\begin{equation}\label{eq:calc1}
\alpha(a^2-b^2)+(\frac14 b^2-\frac34a^2)\lambda_2+(\frac34 b^2-\frac14 a^2)\lambda_3=0.
\end{equation}
On the other hand, from \eqref{eq:aut-2} we get
\[
\alpha = (\frac{3}2a^2-\frac{1}2 b^2)\lambda_2+(-\frac{1}2 a^2+\frac{3}2 b^2 )\lambda_3.
\]
Now, use that $b^2=1-a^2$ and substitute $\alpha$ in \eqref{eq:calc1}:
\[
(\lambda_2-\lambda_3)(8a^4-8a^2+\frac32)=0
\]
Since we are assuming $a,b>0$, the possible solutions are:
\begin{itemize}
\item $\alpha=\lambda_2 = \lambda_3$,
\item $\alpha=\lambda_2\neq \lambda_3$, $a=\frac{\sqrt{3}}2$ and $b=\frac12$,
\item $\alpha=\lambda_3\neq \lambda_2$, $a=\frac{1}2$ and $b=\frac{\sqrt{3}}2$.
\end{itemize}
Now, substitute in \eqref{eq:aut-1} or \eqref{eq:aut-2} to see that if $\alpha=\lambda_2=\lambda_3$ then $A(x,z,w,y)+A(x,w,z,y)=0$, if $\alpha=\lambda_2\neq \lambda_3$ then $A(x,z,w,y)+A(x,w,z,y)=\lambda_2-\lambda_3$, and if $\alpha=\lambda_3\neq \lambda_2$ then $A(x,z,w,y)+A(x,w,z,y)=\lambda_3-\lambda_2$.

We repeat this argument for $\frac{\sqrt{3}}2x+\frac12z$ and $\frac{\sqrt{3}}2x+\frac12w$ to see that:
\[
\begin{array}{l}
A(x,z,w,y)+A(x,w,z,y)=\pm(\lambda_2-\lambda_3),\\
\noalign{\smallskip}
A(x,w,y,z)+A(x,y,w,z)=\pm(\lambda_3-\lambda_1),\\
\noalign{\smallskip}
A(x,y,z,w)+A(x,z,y,w)=\pm(\lambda_1-\lambda_2).
\end{array}
\]
Changing the sign of a vector in $\{x,y,z,w\}$ if necessary, we can assume without loss of generality that:
\[
\begin{array}{l}
A(x,z,w,y)+A(x,w,z,y)=\lambda_2-\lambda_3,\\
\noalign{\smallskip}
A(x,w,y,z)+A(x,y,w,z)=\lambda_3-\lambda_1,\\
\noalign{\smallskip}
A(x,y,z,w)+A(x,z,y,w)=\lambda_1-\lambda_2.
\end{array}
\]
Now compute
\[
\begin{array}{rcl}
\lambda_2-\lambda_3&=&A(x,z,w,y)+A(x,w,z,y)\\
&=&A(x,z,w,y)-A(w,z,x,y)-A(z,x,w,y)\\
&=&2A(x,z,w,y)-A(w,z,x,y)\\
&=&2A(x,z,w,y)+\lambda_1-\lambda_2-A(z,x,w,y)\\
&=&3A(x,z,w,y)+\lambda_1-\lambda_2
\end{array}
\]
to see that $A(x,z,w,y)=\frac{-\lambda_1+2\lambda_2-\lambda_3}{3}$. Use the previous relations and equations \eqref{eq:sim-curv} to compute the other components of the curvature which involve all the elements of the basis $\{x,y,z,w\}$. Hence the curvature tensor is given by:
\[
\begin{array}{c}
A(x,y,y,x)=A(z,w,w,z)=\lambda_1,\, A(x,z,z,x)=A(y,w,w,y)=\lambda_2,\\
\noalign{\smallskip}
A(x,w,w,x)=A(y,z,z,y)=\lambda_3,\, A(x,z,w,y)=\frac{-\lambda_1+2\lambda_2-\lambda_3}{3}\,,\\
\noalign{\smallskip}
A(x,w,z,y)=\frac{\lambda_1+\lambda_2-2\lambda_3}{3}\,, A(x,y,w,z)=\frac{-2\lambda_1+\lambda_2+\lambda_3}{3}\,.
\end{array}
\]
Recall that $\rho(\cdot,\cdot)=(\lambda_1+\lambda_2+\lambda_3)\langle\cdot,\cdot\rangle$ and $\tau=4(\lambda_1+\lambda_2+\lambda_3)$. The Weyl tensor is given by:
\[
\begin{array}{c}
W(x,y,y,x)=W(z,w,w,z)=\frac{2\lambda_1-\lambda_2-\lambda_3}3,\\
\noalign{\smallskip}
W(x,z,z,x)=W(y,w,w,y)=\frac{-\lambda_1+2\lambda_2-\lambda_3}3,\\
\noalign{\smallskip}
W(x,w,w,x)=W(y,z,z,y)=\frac{-\lambda_1-\lambda_2+2\lambda_3}3,\\
\noalign{\smallskip}
W(x,z,w,y)=\frac{-\lambda_1+2\lambda_2-\lambda_3}{3}, W(x,w,z,y)=\frac{\lambda_1+\lambda_2-2\lambda_3}{3},
W(x,y,w,z)=\frac{-2\lambda_1+\lambda_2+\lambda_3}{3}\,.
\end{array}
\]
Now we see that \model is Osserman by checking that, with the chosen orientation, it is anti-self-dual:
\[
\begin{array}{l}
2W^+_{11}=W_{1212}+W_{3434}+2W_{1234}=\frac{-2\lambda_1+\lambda_2+\lambda_3}3+\frac{-2\lambda_1+\lambda_2+\lambda_3}3+2\frac{2\lambda_1-\lambda_2-\lambda_3}{3}=0,\\
\noalign{\smallskip}
2W^+_{22}=W_{1313}+W_{2424}-2W_{1324}=\frac{\lambda_1-2\lambda_2+\lambda_3}3+\frac{\lambda_1-2\lambda_2+\lambda_3}3-2\frac{\lambda_1-2\lambda_2+\lambda_3}{3}=0,\\
\noalign{\smallskip}
2W^+_{33}=W_{1414}+W_{2323}+2W_{1423}=\frac{\lambda_1+\lambda_2-2\lambda_3}3+\frac{\lambda_1+\lambda_2-2\lambda_3}3+2\frac{-\lambda_1-\lambda_2+2\lambda_3}{3}=0,\\
\noalign{\smallskip}
2W^+_{12}=W_{1213}-W_{1224}+W_{3413}-W_{3424}=0,\\
\noalign{\smallskip}
2W^+_{13}=W_{1214}+W_{1223}+W_{3414}+W_{3423}=0,\\
\noalign{\smallskip}
2W^+_{23}=W_{1314}+W_{1323}-W_{2414}-W_{2423}=0.
\end{array}
\]
\end{proof}

{\it Proof of Theorem~\ref{th:main}}. That (i) implies (ii) was proved in \cite{R} (see \cite{Gilkey-book} for an alternative proof). That (ii) implies (i) is a direct consequence of Theorem~\ref{th:einstein-self-dual} and Lemmas~\ref{lemma:einstein} and \ref{lemma:self-dual}.$\hfill\square$

\end{document}